\documentclass[a4paper,10pt]{amsart}
\textwidth16cm\textheight21cm\oddsidemargin-0.1cm\evensidemargin-0.1cm
\parindent0.0em
\usepackage{amsmath,amssymb,color,multirow}
\usepackage[T1]{fontenc}
\usepackage[colorlinks]{hyperref}
\definecolor{linkblue}{RGB}{1,1,190}
\definecolor{citered}{RGB}{190,1,1}
\hypersetup{linkcolor=linkblue,urlcolor=linkblue,citecolor=citered}

\theoremstyle{plain}
\newtheorem{theorem}{\bf Theorem}[section]
\newtheorem{lemma}[theorem]{\bf Lemma}

\theoremstyle{definition}
\newtheorem{remark}[theorem]{\bf Remark}
\newtheorem{conjecture}[theorem]{\bf Conjecture}

\hypersetup{hypertexnames=false}
\numberwithin{equation}{section}

\makeatletter\@namedef{subjclassname@2020}{\textup{2020} Mathematics Subject Classification}\makeatother
\allowdisplaybreaks

\begin{document}
\title[A counterexample to the AAC-Conjecture]{A counterexample to the\\ Conjecture of Ankeny, Artin and Chowla}
\author{Andreas Reinhart}
\address{Institut f\"ur Mathematik und Wissenschaftliches Rechnen, Karl-Franzens-Universit\"at Graz, NAWI Graz, Heinrichstra{\ss}e 36, 8010 Graz, Austria}
\email{andreas.reinhart@uni-graz.at}
\keywords{fundamental unit, Pell equation, real quadratic number field}
\subjclass[2020]{11R11, 11R27}
\thanks{This work was supported by the Austrian Science Fund FWF, Project Number P36852-N}

\begin{abstract}
Let $p$ be a prime number with $p\equiv 1\mod 4$, let $\omega=\frac{1+\sqrt{p}}{2}$, let $\varepsilon>1$ be the fundamental unit of $\mathbb{Z}[\omega]$ and let $x$ and $y$ be the unique nonnegative integers with $\varepsilon=x+y\omega$. The Ankeny-Artin-Chowla-Conjecture states that $p$ is not a divisor of $y$. In this note, we provide and discuss a counterexample to this conjecture.
\end{abstract}

\maketitle

\section{Introduction}\label{1}

Let $d\geq 2$ be a squarefree integer, let $K=\mathbb{Q}(\sqrt{d})$ and let $\mathcal{O}_K$ be the ring of algebraic integers of $K$. Set

\[
\omega=\begin{cases}\sqrt{d}&\textnormal{if }d\equiv 2,3\mod 4,\\\frac{1+\sqrt{d}}{2}&\textnormal{if }d\equiv 1\mod 4,\end{cases}\quad\textnormal{and}\quad\mathsf{d}_K=\begin{cases}
4d&\textnormal{if }d\equiv 2,3\mod 4,\\ d&\textnormal{if }d\equiv 1\mod 4.\end{cases}
\]

It is well-known that $\mathcal{O}_K=\mathbb{Z}[\omega]=\mathbb{Z}\oplus\omega\mathbb{Z}$. Let $\varepsilon\in\mathcal{O}_K$ be the fundamental unit with $\varepsilon>1$. Let $x,y\in\mathbb{N}_0$ be such that

\[
\varepsilon=x+y\omega.
\]

Let $\mathbb{P}$ denote the set of prime numbers and let ${\rm h}(d)$ be the class number of $K$. This note revolves around the following conjecture.

\begin{conjecture}[The Conjecture of Ankeny, Artin and Chowla, or AAC-Conjecture]\label{Conjecture 1.1} If $d=p\in\mathbb{P}$ and $p\equiv 1\mod 4$, then $p\nmid y$.
\end{conjecture}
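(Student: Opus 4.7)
Since the note sets out to \emph{disprove} Conjecture \ref{Conjecture 1.1}, my plan is not to prove it but to construct an explicit prime $p\equiv 1\pmod 4$ with $p\mid y$. The first step is to recast the divisibility condition in structural terms. With $\omega=(1+\sqrt{p})/2$ one has $p\mathcal{O}_K=p\mathbb{Z}\oplus p\omega\mathbb{Z}$, whence $\mathcal{O}_p=\mathbb{Z}+p\mathcal{O}_K=\mathbb{Z}\oplus p\omega\mathbb{Z}$. Thus $\varepsilon=x+y\omega$ lies in $\mathcal{O}_p$ if and only if $p\mid y$, and since $\varepsilon\in\mathcal{O}_K^\times$ automatically forces $\varepsilon\in\mathcal{O}_p^\times$ as soon as $\varepsilon\in\mathcal{O}_p$, the AAC-condition $p\nmid y$ is equivalent to $\varepsilon\notin\mathcal{O}_p^\times$. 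So the task is to find a prime $p\equiv 1\pmod 4$ for which the fundamental unit of $\mathcal{O}_K$ is already a unit of the conductor-$p$ order.

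The second step is to turn this membership problem into a numerical criterion that can actually be checked. Because $\mathcal{O}_K^\times=\{\pm 1\}\times\langle\varepsilon\rangle$ and $\{\pm 1\}\subseteq\mathcal{O}_p^\times$, the index $[\mathcal{O}_K^\times:\mathcal{O}_p^\times]$ equals the least $k\in\mathbb{N}$ with $\varepsilon^k\in\mathcal{O}_p$. Since $p$ is ramified (it divides $\mathsf{d}_K=p$), the Dedekind-style class number formula for orders collapses to
\[
|{\rm Pic}(\mathcal{O}_p)|\cdot[\mathcal{O}_K^\times:\mathcal{O}_p^\times]=p\cdot{\rm h}(p),
\]
so a counterexample is precisely a prime $p\equiv 1\pmod 4$ with $|{\rm Pic}(\mathcal{O}_p)|=p\cdot{\rm h}(p)$. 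Concretely I would enumerate such primes and, for each, compare the two sides (or, equivalently, compute the unit index $k$ directly).

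The decisive obstacle is computational complexity. The fundamental unit $\varepsilon$ typically has coordinates of size $e^{O(\sqrt{p}\log p)}$, so no approach that tries to write $\varepsilon$ down explicitly can succeed beyond small ranges. One must either work with $\varepsilon$ in compact form (through the infrastructure of the quadratic order, a continued-fraction expansion of $\sqrt{p}$, or subexponential class-group algorithms) or, better still, replace the membership test by a congruence modulo $p$ on an efficiently computable quantity, so that scanning huge ranges of $p$ becomes feasible. Heuristically $p\mid y$ should hold with probability on the order of $1/p$, so counterexamples are expected to be extremely sparse and to begin appearing only for large $p$; the whole plan therefore hinges on having both an efficient congruential test and enough computing power to drive the search into the range where the first counterexample actually occurs, followed by an independent verification of that specific $p$.
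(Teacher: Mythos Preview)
Your reformulation is exactly the one the paper uses: the equivalence $p\mid y\iff\varepsilon\in\mathcal{O}_p^{\times}$ and the index argument (via the ramified prime $p$ and the cyclic quotient $\mathcal{O}_K^{\times}/\mathcal{O}_p^{\times}$) are precisely the content of the paper's Lemma~2.2. So at the structural level you and the paper agree.

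Where the approaches diverge is in the \emph{test} and in the \emph{verification}. You propose detecting a counterexample by comparing $|{\rm Pic}(\mathcal{O}_p)|$ with $p\cdot{\rm h}(p)$. The paper instead works directly with $y\bmod d$: the search runs the continued fraction expansion of $\omega$ and tracks the denominators of the convergents modulo $d$ (concretely, the small/large step algorithms of Stephens--Williams, with a vectorized AVX-512 implementation), stopping when the residue is $0$. This is cheaper than computing Picard groups and is what makes the scan to $3.4\cdot 10^{14}$ feasible. For the post-hoc certificate the paper does \emph{not} recompute class numbers either: it exhibits an explicit $\eta=\frac{u+vd\sqrt{d}}{2}\in\mathcal{O}_d^{\times}$ with $1<\eta<\varepsilon^d$ (coming from a solution of $u^2-d^3v^2=-4$), and then Lemma~2.2 forces $d\mid y$. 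Your class-number identity would also certify the result, but it requires an unconditional computation of $|{\rm Pic}(\mathcal{O}_p)|$ for a $15$-digit conductor, which is heavier than simply checking one Pell-type identity.

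The genuine gap is that your proposal stops at the plan. The paper's contribution is the specific prime $d=331914313984493$ together with (i) a primality certificate via Pocklington/Lemma~2.1 and (ii) the divisibility certificate just described. Without producing that prime (or another one), the conjecture is not refuted; your outline, while sound, does not yet constitute a disproof.
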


The AAC-Conjecture goes back to a paper of N. C. Ankeny, E. Artin and S. Chowla in 1952 (see \cite[page 480]{AnArCh52}) where the authors posed Conjecture~\ref{Conjecture 1.1} as a question. This question was later rephrased as a conjecture by A. A. Kiselev and I. Sh. Slavutski\u{\i} in 1959 \cite{KiSl59} and independently by L. J. Mordell in 1960 \cite{Mo60}. The main problem that motivated this conjecture is the computation of the class number ${\rm h}(p)$ in terms of rational numbers. For instance, if $d=p\in\mathbb{P}$ with $p\equiv 1\mod 4$, then ${\rm h}(p)y\equiv (2x+y)B_{(p-1)/2}\mod p$ by \cite{Sl04}, where $B_n$ is the $n$th Bernoulli number for each $n\in\mathbb{N}_0$. Also note that ${\rm h}(p)<p$ \cite{Sl04} and $B_n\in\mathbb{Q}$ for each $n\in\mathbb{N}_0$. If $p\nmid y$, then ${\rm h}(p)$ can be determined from the aforementioned congruence.

The Conjecture of Ankeny, Artin and Chowla has a long history and was investigated in a large number of research papers. For instance, see \cite{Ag16,BeWiZa71,BeMo24,FeMu23,Fe25,HaJo18,Ha01,Ha07,KiSl59,Lu92,Mo60,VaTeWi01,VaTeWi03,SiSh24,StWi88} (to mention a few) and the survey article by I. Sh. Slavutski\u{\i} \cite{Sl04}. The AAC-Conjecture was known to be true up to $2\cdot 10^{11}$ (see the tables below) and for a long time it was believed to hold in general. Despite substantial progress towards a proof, there were also good reasons to believe that it is false. First doubts came up by a heuristic argument (\cite[page 82]{Wa82}) which indicates that infinitely many counterexamples may exist (under the assumption that $B_{(p-1)/2}$ modulo $p$ is random for each prime number $p$ with $p\equiv 1\mod 4$). Moreover, the analogue of the AAC-Conjecture fails for fake real quadratic orders \cite{Wa17}. Another strong argument against the AAC-Conjecture is the existence of composite squarefree integers $d$ such that $d\mid y$ \cite{Re23,StWi88}. Our own doubts grew bigger after having obtained a counterexample to a related conjecture in \cite[Example 3.1]{Re24} and \cite[Theorem 2.2]{Rei24} (see the discussion in Section~\ref{4}). Thus, we started with an ``AAC-Conjecture specific'' computer search, which ended up with Theorem~\ref{Theorem 2.3}. We end this section with a brief historic overview on the verification of the AAC-Conjecture.

\begin{center}
{\it Verification history for the AAC-Conjecture}
\end{center}

\begin{table}[htbp]
\centering
\begin{tabular}{|c|c|c|}
\hline
Upper bound for $p$ & Investigator(s) & Year\\
\hline
$2000$ & Ankeny, Artin, Chowla \cite{AnArCh52} & 1952\\
\hline
$100000$ & Goldberg \cite{Mo60,Mo61} & $1954$\\
\hline
$6270714$ & Beach, Williams, Zarnke \cite{BeWiZa71} & $1971$\\
\hline
$10^9$ & Stephens, Williams \cite{StWi88} & $1988$\\
\hline
$10^{11}$ & van der Poorten, te Riele, Williams \cite{VaTeWi01} & $2001$\\
\hline
$2\cdot 10^{11}$ & van der Poorten, te Riele, Williams \cite{VaTeWi03} & $2003$\\
\hline
$1.5\cdot 10^{12}$ & Reinhart$^*$ \cite{Re23} & $2023$\\
\hline
$5.325\cdot 10^{13}$ & Reinhart$^*$ \cite{Re24} & $2024$\\
\hline
\end{tabular}
\end{table}

(*) Note that no independent double check of the search interval was done here.

\section{Main theorem}\label{2}

In this section, we provide the counterexample to the AAC-Conjecture. We start with some preliminaries.

\begin{lemma}\label{Lemma 2.1}
Let $c>1$ be an integer satisfying $2^{c-1}\equiv 1\mod c$, and assume that there exists a divisor $a$ of $c-1$ satisfying $a^2>c$ and ${\rm gcd}\left(2^{\frac{c-1}{p}}-1,c\right)=1$ for each prime number $p$ dividing $a$. Then $c\in\mathbb{P}$.
\end{lemma}

\begin{proof}
This follows from \cite[Proposition 2.1]{Rei24} (and is based on the more general results of \cite{BrLeSe75} and \cite{Po14}).
\end{proof}

We say that a subring $\mathcal{O}$ of $K$ with quotient field $K$ is an {\it order} in $K$ if it is a finitely generated abelian group with respect to addition. For each $f\in\mathbb{N}$, let $\mathcal{O}_f=\mathbb{Z}+f\mathcal{O}_K$ and observe that $\mathcal{O}_f$ is the unique order in $K$ with conductor $f$ (i.e., $\{z\in\mathcal{O}_f:z\mathcal{O}_K\subseteq\mathcal{O}_f\}=f\mathcal{O}_K$). Let $\mathcal{O}^{\times}$ denote the unit group of $\mathcal{O}$ for each order $\mathcal{O}$ in $K$.

\begin{lemma}\label{Lemma 2.2}
Let $d=p\in\mathbb{P}$ and $\eta\in\mathcal{O}_p^{\times}$ be such that $1<\eta<\varepsilon^p$. Then $p\mid y$.
\end{lemma}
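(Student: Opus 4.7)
The plan is to show that, under the hypothesis, the fundamental unit $\varepsilon$ itself must lie in the order $\mathcal{O}_d$; since $\mathcal{O}_d=\mathbb{Z}+d\mathcal{O}_K=\mathbb{Z}\oplus d\omega\mathbb{Z}$, the identity $\varepsilon=x+y\omega$ then immediately yields $d\mid y$. I would first observe that $\eta=\varepsilon^k$ for some integer $k$ with $1\leq k\leq d-1$: indeed, $\mathcal{O}_d^\times\subseteq\mathcal{O}_K^\times=\{\pm\varepsilon^\ell:\ell\in\mathbb{Z}\}$, so $\eta>0$ forces $\eta=\varepsilon^k$ for some $k\in\mathbb{Z}$, and the inequalities $1<\eta<\varepsilon^d$ together with $\varepsilon>1$ pin down $k\in[1,d-1]$.

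The heart of the argument is an index computation in the finite ring $\mathcal{O}_K/d\mathcal{O}_K$. Because $K=\mathbb{Q}(\sqrt{d})$ with $d\in\mathbb{P}$, the discriminant $\mathsf{d}_K\in\{d,4d\}$ is divisible by $d$, so $d$ ramifies in $\mathcal{O}_K$ and $d\mathcal{O}_K=\mathfrak{p}^2$ for some prime ideal $\mathfrak{p}$ of residue degree one. Consequently $\mathcal{O}_K/d\mathcal{O}_K$ is a local ring of cardinality $d^2$ whose maximal ideal has $d$ elements, so $|(\mathcal{O}_K/d\mathcal{O}_K)^\times|=d(d-1)$. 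The natural ring homomorphism $\mathbb{Z}\to\mathcal{O}_K/d\mathcal{O}_K$ identifies $\mathbb{Z}/d\mathbb{Z}$ with a subring that is a field, so $(\mathbb{Z}/d\mathbb{Z})^\times$ sits inside $(\mathcal{O}_K/d\mathcal{O}_K)^\times$ as a subgroup of index exactly $d$.

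Finally, I would let $m$ denote the order of the image of $\varepsilon$ in the quotient group $Q=(\mathcal{O}_K/d\mathcal{O}_K)^\times/(\mathbb{Z}/d\mathbb{Z})^\times$. A unit $\alpha\in\mathcal{O}_K^\times$ lies in $\mathcal{O}_d$ precisely when its reduction modulo $d\mathcal{O}_K$ lies in the image of $\mathbb{Z}$, so $m$ coincides with the smallest positive integer for which $\varepsilon^m\in\mathcal{O}_d^\times$. The index computation gives $m\mid|Q|=d$, while the first step gives $m\leq k\leq d-1$. Primality of $d$ then forces $m=1$, i.e.\ $\varepsilon\in\mathcal{O}_d$, completing the proof. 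I foresee no substantial obstacle; the only delicate point is the uniform computation of $|(\mathcal{O}_K/d\mathcal{O}_K)^\times|$ across $d=2$, $d\equiv 1\mod 4$ and $d\equiv 3\mod 4$, but this follows cleanly from the ramification of $d$ in $\mathcal{O}_K$.
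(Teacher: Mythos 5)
Your argument is correct and follows essentially the same route as the paper: reduce to $\eta=\varepsilon^k$ with $1\le k<d$, show that the least $r$ with $\varepsilon^r\in\mathcal{O}_d^\times$ divides both $k$ and $d$, and conclude $\varepsilon\in\mathcal{O}_d^\times$ (hence $d\mid y$) from the primality of $d$. The only difference is that where the paper imports the divisibility $(\mathcal{O}_K^\times:\mathcal{O}_d^\times)\mid d$ from \cite[Theorem 5.9.7.4]{Ha13}, you derive it directly from the ramification of $d$ and the count $|(\mathcal{O}_K/d\mathcal{O}_K)^\times|=d(d-1)$, which makes the lemma self-contained.
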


\begin{proof}
Obviously, $\eta\in\mathcal{O}_K^{\times}$. Since $1<\eta<\varepsilon^p$, there is some $k\in\mathbb{N}$ such that $k<p$ and $\eta=\varepsilon^k$. By \cite[Theorem 5.9.7.4]{Ha13}, we have that $(\mathcal{O}_K^{\times}:\mathcal{O}_p^{\times})\mid p$ (since $p\mid\mathsf{d}_K$). Clearly, $\min\{r\in\mathbb{N}:\varepsilon^r\in\mathcal{O}_p^{\times}\}=(\mathcal{O}_K^{\times}:\mathcal{O}_p^{\times})$ (since $\mathcal{O}_K^{\times}/\mathcal{O}_p^{\times}$ is generated by $\varepsilon\mathcal{O}_p^{\times}$). Moreover, $\varepsilon^k\in\mathcal{O}_p^{\times}$, and hence $(\mathcal{O}_K^{\times}:\mathcal{O}_p^{\times})\mid\gcd\left(k,p\right)=1$. Therefore, $\varepsilon\in\mathcal{O}_K^{\times}=\mathcal{O}_p^{\times}$, and thus $p\mid y$.
\end{proof}

Next we present the main theorem of this note. We prove it with computer assistance (in several ways) by using Mathematica 13.2.0 and Pari/GP 2.15.2. Readers who are interested in a proof that may be checked without computer assistance, can study the proof in \cite{Rei24}. (A proof of Theorem~\ref{Theorem 2.3} below can be obtained along similar lines.)

\begin{theorem}[The counterexample to the AAC-Conjecture]\label{Theorem 2.3} Let $p=331914313984493$. Then $p$ is a counterexample to the AAC-Conjecture.
\end{theorem}

\begin{proof}
Clearly, $p\equiv 1\mod 4$. To show that $p\in\mathbb{P}$, we can either apply Lemma~\ref{Lemma 2.1} multiple times or use the ``isprime'' function of Pari/GP. It remains to prove that $p\mid y$ and we accomplish this in various ways. First we use ``standard functions'' in Mathematica and Pari/GP. In Mathematica, we apply

{\tiny
\begin{enumerate}
\item[]\hspace*{-1cm} p=331914313984493;Mod[2*NumberFieldFundamentalUnits[Sqrt[p]][[1]][[2]][[2]],p]
\end{enumerate}}

to check that $y$ is divisible by $p$. (This is the case if and only if the program returns $0$.) Note that NumberFieldFundamentalUnits[Sqrt[$p$]] determines the fundamental unit of $\mathcal{O}_K$ and represents it by using the $\mathbb{Q}$-basis $\{1,\sqrt{p}\}$ of $K$. We have to multiply NumberFieldFundamentalUnits[Sqrt[$p$]][[1]][[2]][[2]] (i.e., the rational coefficient attached to $\sqrt{p}$) by $2$, in order to obtain $y$. Also note that Mod[$a$,$p$] computes the remainder of $a$ modulo $p$. In Pari/GP we can write the lines

{\tiny
\begin{enumerate}
\item[]\hspace*{-1cm} default(parisize,100000000);
\item[]\hspace*{-1cm} p=331914313984493;imag(quadunit(p))\%p
\end{enumerate}}

to perform the same task. Observe that quadunit($p$) computes the fundamental unit of $\mathcal{O}_K$ (since $p$ is the discriminant of $K$) and it is represented by the $\mathbb{Q}$-basis $\{1,\frac{1+\sqrt{p}}{2}\}$ of $K$ (since $p\equiv 1\mod 4$). Furthermore, imag(quadunit($p$)) provides the integer coefficient attached to $\frac{1+\sqrt{p}}{2}$ and $a\%p$ is the remainder of $a$ modulo $p$. For more information on the algorithms that are used by Mathematica and Pari/GP to compute the fundamental unit, we refer to the documentation of these programs.

Another approach (to prove $p\mid y$) is to implement the continued fraction algorithm in Mathematica or Pari/GP. Thereby, we determine the continued fraction expansion of $\omega$ and compute the denominators of the convergents of $\omega$ modulo $p$. This is done until we reach the last entry of the period of the continued fraction expansion of $\omega$. In Mathematica, we can write the lines

{\tiny
\begin{enumerate}
\item[]\hspace*{-1cm} p=331914313984493;f[a\texttt{\_},b\texttt{\_},c\texttt{\_},h\texttt{\_}]:=\{k=Floor[(h+b)/a];l=k*a-b;m=(c-l*l)/a;\{m,l,k,c,h\}\}[[1]];
\item[]\hspace*{-1cm} g[u\texttt{\_}]:=\{x=1;y=0;v=Floor[Sqrt[u]];z=f[2,1,u,v];While[z[[3]]!=v,\{n=y,y=Mod[y*z[[3]]+x,u],x=n,z=f[z[[1]],z[[2]],z[[4]],z[[5]]]\}];y\}[[1]];g[p]
\end{enumerate}}

and then check that the last value is $0$. To do the same with Pari/GP, we can apply the commands

{\tiny
\begin{enumerate}
\item[]\hspace*{-1cm} p=331914313984493;f(a,b,c,h)=[k=\texttt{floor}((h+b)/a),l=k*a-b,m=(c-l*l)/a,[m,l,k,c,h]][4];
\item[]\hspace*{-1cm} g(u)=[x=1,y=0,v=\texttt{floor}(sqrt(u)),z=f(2,1,u,v),while(z[3]!=v,[n=y,y=(y*z[3]+x)\%u,x=n,z=f(z[1],z[2],z[4],z[5])]),y][6];
\item[]\hspace*{-1cm} g(p)
\end{enumerate}}

Also note that the period length of the continued fraction expansion of $\omega$ is $1486413$. Finally, we present a method to prove $p\mid y$ that does not rely on continued fraction expansions. The first step is to find $u,v\in\mathbb{N}$ such that $u<10^{p-1}$, $v<10^{p-16}$ and $u^2-p^3v^2=-4$. With Pari/GP this can be done, for instance, with the commands

{\tiny
\begin{enumerate}
\item[]\hspace*{-1cm} default(parisize,100000000);
\item[]\hspace*{-1cm} p=331914313984493;u=2*real(quadunit(p))+imag(quadunit(p));v=imag(quadunit(p))/p;
\end{enumerate}}

Next we store the values $u$ and $v$ (e.g. see \href{https://imsc.uni-graz.at/reinhart/u.txt}{https://imsc.uni-graz.at/reinhart/u.txt} and \href{https://imsc.uni-graz.at/reinhart/v.txt}{https://imsc.uni-graz.at/reinhart/v.txt}) in save files. We want to emphasize that $u$ is a positive integer with $764604$ digits and $v$ is a positive integer with $764582$ digits. Now we show that $p\mid y$ (without continued fraction expansions). (Note that it does not matter how we obtained $u$ and $v$.) Let $u,v\in\mathbb{N}$ be from the save files before. Check that $u<10^{p-1}$, $v<10^{p-16}$ and $u^2-p^3v^2=-4$. (We only require a computer to check the last equality.) Clearly, $u\equiv v\mod 2$ and $vp<10^{p-1}$. Set $\eta=\frac{u+vp\sqrt{p}}{2}$. Since $u+vp$ and $u-vp$ are even, we have that $\eta=\frac{u-vp}{2}+vp\omega\in\mathcal{O}_p$ and $\frac{u-vp\sqrt{p}}{2}=\frac{u+vp}{2}-vp\omega\in\mathcal{O}_p$. Moreover, $\eta\frac{u-vp\sqrt{p}}{2}=-1$, and thus $\eta\in\mathcal{O}_p^{\times}$. Observe that $1<\eta<10^{p-1}\left(\frac{1+\sqrt{p}}{2}\right)\leq\left(\frac{1+\sqrt{p}}{2}\right)^p\leq\varepsilon^p$. Therefore, $p\mid y$ by Lemma~\ref{Lemma 2.2}.
\end{proof}

\section{Analysis of the counterexample and how it was found}\label{3}

For the computer search that lead to the discovery of $p=331914313984493$, we implemented two algorithms that are discussed in \cite{StWi88}. They are called the {\it small step algorithm} and the {\it large step algorithm}. Let $\varepsilon^{\prime}$ be the smallest positive power of $\varepsilon$ such that $\varepsilon^{\prime}\in\mathbb{Z}[\sqrt{d}]$ and let $X,Y\in\mathbb{N}$ be such that $\varepsilon^{\prime}=X+Y\sqrt{d}$. It is well-known \cite{StWi88} that $\varepsilon^{\prime}\in\{\varepsilon,\varepsilon^3\}$. If $d\mid y$, then $d\mid Y$. If $d\mid Y$, then $d\mid 3y$. Moreover, if $d\not\equiv 5\mod 8$ or $3\nmid d$, then $d\mid y$ if and only if $d\mid Y$. The large step algorithm determines whether a squarefree integer $d\geq 2$ satisfies $d\mid Y$. It has a time complexity of $O(d^{\frac{1}{4}+\delta})$ (for each positive real number $\delta$ by \cite{StWi88}) and it is used for search purposes. The small step algorithm determines whether a squarefree integer $d\geq 2$ satisfies $d\mid y$. Since it has a time complexity of $O(d^{\frac{1}{2}+\delta})$ (for each positive real number $\delta$ by \cite{StWi88}), it is (primarily) used for verification purposes. Also note that the small step algorithm is (in general) required for a proper check of the condition ``$d\mid y$'', since the condition ``$d\mid Y$'' is weaker than the condition ``$d\mid y$''. In what follows, let $e=\lfloor\sqrt{d}\rfloor$ and for all $a\in\mathbb{Z}$ and $b\in\mathbb{N}$, let ${\rm rem}(a,b)=\min\{r\in\mathbb{N}_0:a\equiv r\mod b\}$. For all $a,b,c\in\mathbb{Z}$, let ${\rm gcd}(a,b)$, resp. ${\rm gcd}(a,b,c)$ be the greatest common divisor of $a$ and $b$, resp. of $a$, $b$ and $c$. We continue with a brief discussion of the small step algorithm.

\smallskip
{\bf The small step algorithm}: Let $(P_i)_{i=0}^{\infty}$, $(Q_i)_{i=0}^{\infty}$ and $(F_i)_{i=-1}^{\infty}$ be sequences of integers defined recursively as follows. Set

\[
P_0=\begin{cases}0&\textnormal{if }d\equiv 2,3\mod 4,\\ 1&\textnormal{if }d\equiv 1\mod 4,\end{cases}\quad Q_0=\begin{cases}
1&\textnormal{if }d\equiv 2,3\mod 4,\\ 2&\textnormal{if }d\equiv 1\mod 4,\end{cases}\quad F_{-1}=1\quad\textnormal{and}\quad F_0=0.
\]

For each $i\in\mathbb{N}_0$, let $P_{i+1}=\Big\lfloor\frac{P_i+e}{Q_i}\Big\rfloor Q_i-P_i$, $Q_{i+1}=\frac{d-P_{i+1}^2}{Q_i}$ and $F_{i+1}={\rm rem}\left(\Big\lfloor\frac{P_i+e}{Q_i}\Big\rfloor F_i+F_{i-1},d\right)$. Set $s=\min\{r\in\mathbb{N}_0:P_r=P_{r+1}\textnormal{ or }Q_r=Q_{r+1}\}$. Then

\[
y\equiv\begin{cases}F_s^2+F_{s+1}^2\mod d&\textnormal{if }Q_s=Q_{s+1}.\\ (F_{s-1}+F_{s+1})F_s\mod d&\textnormal{if }Q_s\not=Q_{s+1}.\end{cases}
\]

The small step algorithm is based on the ordinary continued fraction algorithm. It computes the continued fraction expansion (CFE) of $\omega$ until it reaches about half of the period length of the CFE of $\omega$. Thanks to well-known identities (see \cite[page 621]{StWi88}) one can now determine whether $d\mid y$. Next we briefly discuss the large step algorithm.

\smallskip
{\bf The large step algorithm}: Let $L\in\mathbb{N}$ (we pick $L\in\{\lfloor 2.5\cdot\sqrt[4]{d}\rfloor,\lfloor 2.75\cdot\sqrt[4]{d}\rfloor\}$ depending on the implementation) and let $(P_i)_{i=0}^{\infty}$, $(Q_i)_{i=0}^{\infty}$ and $(F_i)_{i=-1}^{\infty}$ be sequences of integers defined recursively as follows. Set $P_0=0$, $Q_0=1$, $F_{-1}=1$ and $F_0=0$. For each $i\in\mathbb{N}_0$, let $P_{i+1}=\Big\lfloor\frac{P_i+e}{Q_i}\Big\rfloor Q_i-P_i$, $Q_{i+1}=\frac{d-P_{i+1}^2}{Q_i}$ and $F_{i+1}={\rm rem}\left(\Big\lfloor\frac{P_i+e}{Q_i}\Big\rfloor F_i+F_{i-1},d\right)$. Set $s=\min\{r\in\mathbb{N}:Q_r=1\textnormal{ or }(r\geq L\textnormal{ and }Q_r\leq e)\}$.

\smallskip
CASE 1: $Q_s=1$. Then $Y\equiv F_s\mod d$.

\smallskip
CASE 2: $s\geq L$ and $1\not=Q_s\leq e$. Let $\left(\overline{P}_j\right)_{j=0}^{\infty}$, $\left(\overline{Q}_j\right)_{j=0}^{\infty}$, $(V_j)_{j=0}^{\infty}$, $\left(\overline{E}_j\right)_{j=0}^{\infty}$ and $\left(\overline{F}_j\right)_{j=0}^{\infty}$ be sequences of integers defined recursively as follows. Let $\overline{P}_0={\rm rem}\left(P_s,Q_s\right)$, $\overline{Q}_0=Q_s$, $V_0={\rm gcd}\left(Q_s,d\right)$, $\overline{E}_0={\rm rem}\left(\frac{P_sF_s+Q_sF_{s-1}}{V_0},d\right)$ and $\overline{F}_0=F_s$. For each $j\in\mathbb{N}_0$, let $G_j={\rm gcd}\left(P_s+\overline{P}_j,Q_s,\overline{Q}_j\right)$, let $a_j,b_j,c_j\in\mathbb{Z}$ be such that ${\rm gcd}\left(Q_s,\overline{Q}_j\right)\equiv a_jQ_s\mod\overline{Q}_j$ and $G_j=b_j(P_s+\overline{P}_j)+c_j{\rm gcd}\left(Q_s,\overline{Q}_j\right)$. Let $\left(A_i^{(j)}\right)_{i=0}^{\infty}$, $\left(B_i^{(j)}\right)_{i=0}^{\infty}$ and $\left(C_i^{(j)}\right)_{i=-1}^{\infty}$ be sequences of integers defined recursively as follows. Set $B_0^{(j)}=\frac{Q_s\overline{Q}_j}{G_j^2}$, $A_0^{(j)}={\rm rem}\left(P_s+\frac{Q_s}{G_j}{\rm rem}\left(a_jc_j(\overline{P}_j-P_s)+b_j\frac{d-P_s^2}{Q_s},\frac{\overline{Q}_j}{G_j}\right),B_0^{(j)}\right)$, $C_{-1}^{(j)}=1$ and $C_0^{(j)}=0$. For each $i\in\mathbb{N}_0$, let $A_{i+1}^{(j)}=\Big\lfloor\frac{A_i^{(j)}+\sqrt{d}}{B_i^{(j)}}\Big\rfloor B_i^{(j)}-A_i^{(j)}$, $B_{i+1}^{(j)}=\frac{d-\left(A_{i+1}^{(j)}\right)^2}{B_i^{(j)}}$ and $C_{i+1}^{(j)}={\rm rem}\left(\Big\lfloor\frac{A_i^{(j)}+\sqrt{d}}{B_i^{(j)}}\Big\rfloor C_i^{(j)}+C_{i-1}^{(j)},d\right)$. Let $t_j=\min\{r\in\mathbb{N}_0:0<B_r^{(j)}\leq e\}$, $\overline{P}_{j+1}={\rm rem}\left(A_{t_j}^{(j)},B_{t_j}^{(j)}\right)$, $\overline{Q}_{j+1}=B_{t_j}^{(j)}$, $V_{j+1}={\rm gcd}\left(B_{t_j}^{(j)},d\right)$,

\[
\overline{E}_{j+1}={\rm rem}\left(\frac{\left(A_{t_j}^{(j)}C_{t_j}^{(j)}+B_{t_j}^{(j)}C_{t_j-1}^{(j)}\right)\left(V_0V_j\overline{E}_0\overline{E}_j+d\overline{F}_0\overline{F}_j\right)+dC_{t_j}^{(j)}\left(V_0\overline{E}_0\overline{F}_j+V_j\overline{E}_j\overline{F}_0\right)}{V_0V_jV_{j+1}/{\rm gcd}\left(V_0,V_j\right)},d\right)\textnormal{ and}
\]

\[
\overline{F}_{j+1}={\rm rem}\left(\frac{C_{t_j}^{(j)}\left(V_0V_j\overline{E}_0\overline{E}_j+d\overline{F}_0\overline{F}_j\right)+\left(A_{t_j}^{(j)}C_{t_j}^{(j)}+B_{t_j}^{(j)}C_{t_j-1}^{(j)}\right)\left(V_0\overline{E}_0\overline{F}_j+V_j\overline{E}_j\overline{F}_0\right)}{V_0V_j/{\rm gcd}\left(V_0,V_j\right)},d\right).
\]

Let $m=\min\{r\in\mathbb{N}:$ there is some $n\in\mathbb{N}_0$ such that $n\leq s$, $\overline{Q}_r=Q_n$ and $\overline{P}_r\equiv P_n\mod Q_n\}$. Let $n\in\mathbb{N}_0$ be minimal such that $n\leq s$, $\overline{Q}_m=Q_n$ and $\overline{P}_m\equiv P_n\mod Q_n$. Then

\[
d\mid Y\quad\textnormal{ if and only if }\quad d\mid F_n\overline{E}_m-\frac{P_nF_n+Q_nF_{n-1}}{V_m}\overline{F}_m.
\]

The large step algorithm is also based on the continued fraction algorithm. It consists of two stages. In the first stage, we compute a certain number of steps (approximately given by the number $L$ above) of the CFE of $\sqrt{d}$. Each step of the CFE is a ``small step''. If the period length of the CFE of $\sqrt{d}$ is smaller than $L$, then $Y\equiv F_s\mod d$. If this is not the case, then the second stage comes into play. This stage is based on the composition of reduced binary quadratic forms (cf. Shank's infrastructure \cite{Sh72}). It enables us to perform ``large steps'' (i.e., approximately $s$ small steps combined) in the CFE of $\sqrt{d}$.

For more information on these two algorithms, we refer to \cite{StWi88}. On the one hand, we used plain versions (i.e., versions without optimizations) of both the small step algorithm and the large step algorithm. On the other hand, we used more advanced versions of the large step algorithm (with many optimizations) including a vectorized version (that involves AVX-512 instructions). For the scalar implementations, we used $L=\lfloor 2.5\cdot\sqrt[4]{d}\rfloor$ and for the vectorized implementation, we used $L=\lfloor 2.75\cdot\sqrt[4]{d}\rfloor$. All programs for the search were written in C and they were compiled with either GCC-12.2.0 or GCC-12.3.0 (with the compiler flag -O3). We only used privately owned hardware to perform the computations below. Next we summarize all types of computations that we performed in the search for squarefree integers $d\geq 2$ with $d\mid y$ so far.

\begin{itemize}
\item Search for all squarefree $2\leq d\leq 10^{10}$ with the plain version of the small step algorithm in $50$ hours with $16$ CPU cores.
\item Search for all squarefree $2\leq d\leq 1.5\cdot 10^{12}$ with the plain version of the large step algorithm in $650$ hours with $140$ CPU cores.
\item Search for all squarefree $1.5\cdot 10^{12}\leq d\leq 10^{14}$ with the advanced version/AVX-512 version of the large step algorithm in $7500$ hours with $162$ CPU cores.
\item Search for all prime $10^{14}\leq p\leq 3.4\cdot 10^{14}$ with $p\equiv 1\mod 4$ with the advanced version/AVX-512 version of the large step algorithm adapted for primes in $1200$ hours with $162$ CPU cores.
\item Search for all squarefree $10^{14}\leq d\leq 1.815\cdot 10^{14}$ with the advanced version/AVX-512 version of the large step algorithm in $7000$ hours with $170$ CPU cores.
\item Search for all squarefree $1.815\cdot 10^{14}\leq d\leq 2.3\cdot 10^{14}$ with the advanced version/AVX-512 version of the large step algorithm in $2850$ hours with $218$ CPU cores.
\end{itemize}

Next we discuss the counterexample in Theorem~\ref{Theorem 2.3}. First we provide an analysis along the lines of \cite[page 93]{Re24}. Let ${\rm N}:K\rightarrow\mathbb{Q}$ defined by ${\rm N}(a+b\sqrt{d})=a^2-db^2$ for all $a,b\in\mathbb{Q}$ be the norm map on $K$. In particular, ${\rm N}(\varepsilon)$ is the norm of the fundamental unit of $\mathcal{O}_K$. For each $f\in\mathbb{N}$, let ${\rm h}_f(d)$ be the class number of $\mathcal{O}_f$ (i.e., the number of elements of the class group of $\mathcal{O}_f)$. The integer $d$ is said {\it to satisfy $($RC\,$)$} if $\{f\in\mathbb{N}:{\rm h}(d)={\rm h}_f(d)\}=\{1\}$ (i.e., $\mathcal{O}_K$ is the only order in $K$ with relative class number $1$). It was proved in \cite[Proposition 2.2]{Re24} (based on results of \cite{ChSa14}) that $d$ satisfies (RC) if and only if ${\rm N}(\varepsilon)=1$, $d\not\equiv 1\mod 8$, $y$ is even and $d\mid y$. Let $\alpha\in\{0,1\}$ be such that $y\equiv\alpha\mod 2$ and let $\beta\in\{1,2,3,5,6,7\}$ be such that $d\equiv\beta\mod 8$. Set $s=|\{p\in\mathbb{P}:p\mid d\}|$. In what follows, we use the tables of \cite{Re24} without further mention. The entries in the table below were computed with Mathematica and Pari/GP. To compute the class number ${\rm h}(d)$ (for the specific $d$ below), we used the commands qfbclassno($d$) and quadclassunit($d$)[1] in Pari/GP and we used the command NumberFieldClassNumber[Sqrt[$d$]] in Mathematica.

\begin{table}[htbp]
\centering
\begin{tabular}{|c|c|c|c|c|c|c|c|c|}
\hline
$d$ & $d\mid Y$ & $d\mid y$ & \textnormal{(RC)} & $\alpha$ & $\beta$ & $s$ & ${\rm N}(\varepsilon)$ & ${\rm h}(d)$\\
\hline
$331914313984493$ & \textnormal{true} & \textnormal{true} & \textnormal{false} & $1$ & $5$ & $1$ & $-1$ & $3$\\
\hline
\end{tabular}
\end{table}

Let $\Omega$ be the set consisting of the $22$ known values of squarefree integers $d\geq 2$ for which $d\mid y$. Set $\Omega_{\beta}=\{x\in\Omega:x\equiv\beta\mod 8\}$. For each $\ell\in\mathbb{N}$, let ${\rm H}_{\ell}=\{x\in\Omega:{\rm h}(x)=\ell\}$.

\clearpage

\begin{table}[htbp]
\centering
\begin{tabular}{|*{16}{c|}}
\cline{1-7}\cline{9-16}
$\beta$ & $1$ & $2$ & $3$ & $5$ & $6$ & $7$ & \quad \quad & $\ell$ & $1$ & $2$ & $3$ & $4$ & $8$ & $16$ & 32\\
\cline{1-7}\cline{9-16}
$|\Omega_{\beta}|$ & $5$ & $3$ & $2$ & $1$ & $7$ & $4$ & \quad \quad & $|{\rm H}_{\ell}|$ & $6$ & $6$ & $1$ & $3$ & $4$ & $1$ & $1$\\
\cline{1-7}\cline{9-16}
\end{tabular}
\end{table}

Note that the counterexample to the AAC-Conjecture above is remarkable in several aspects. It is the first known example of a squarefree integer $d\geq 2$ such that $d\mid y$ and $d\equiv 5\mod 8$. Moreover, it is one of two known examples of a squarefree integer $d\geq 2$ such that $d\mid y$ and ${\rm N}(\varepsilon)=-1$ (where $d=5374184665$ is the other known example). Besides that, it is the only $d\in\Omega$ such that the odd part of the class number ${\rm h}(d)$ is larger than $1$. This is noteworthy, since it is conjectured (according to the Cohen-Lenstra heuristic \cite{CoLe84}) that the probability that the odd part (of the class number of a real quadratic number field) is larger than $1$ is $1-\left(\prod_{i=2}^{\infty}\frac{\zeta(i)(2^i-1)}{2^i}\right)^{-1}>0.2455$, where $\zeta$ is the Riemann zeta function. Last but not least, the counterexample is probably the smallest counterexample to the AAC-Conjecture. We want to emphasize, however, that the search interval $\{p\in\mathbb{P}:2\cdot 10^{11}\leq p\leq 3.4\cdot 10^{14},p\equiv 1\mod 4\}$ has not been (independently) double checked. (For this reason, we do not want to claim that it is the smallest counterexample.)

\section{Related problems and conjectures}\label{4}

We end this note with a few remarks and open problems. First, we recall a closely related conjecture that was recently settled. For other variants and analogues of the AAC-Conjecture we refer to \cite{HiSh22,Wa24,YuYu98}.

\smallskip
{\bf Mordell's Pellian Equation Conjecture.} If $d=p\in\mathbb{P}$ and $p\equiv 3\mod 4$, then $p\nmid y$.

This conjecture was first formulated (together with the AAC-Conjecture as the extended AAC-Conjecture) by A. A. Kiselev and I. Sh. Slavutski\u{\i} in 1959 (see \cite{KiSl59}). It was later stated independently by L. J. Mordell in 1961 (see \cite[page 283]{Mo61}). Mordell's Pellian Equation Conjecture was disproved in \cite{Re24,Rei24}.

We also want to mention the (combined) analogue of the AAC-conjecture and Mordell's Pellian equation conjecture for pure cubic number fields.

\smallskip
{\bf The Ankeny-Artin-Chowla-Mordell-Conjecture for pure cubic number fields.} Let $p\in\mathbb{P}$ be such that $p\geq 5$, let $d\in\{p,2p\}$, let $L=\mathbb{Q}(\sqrt[3]{d})$, let $\eta>1$ be the fundamental unit of $\mathcal{O}_L$ (i.e., the ring of algebraic integers of $L$) and let $a,b,c\in\mathbb{N}$ be such that $\eta=\frac{1}{3}(a+b\sqrt[3]{d}+c\sqrt[3]{d^2})$. Then $p\nmid b$.

This conjecture was introduced in \cite{HiSh22} and is still open. An analogue for squarefree integers fails, though. We provide the list of all squarefree integers $d$ with $2\leq d\leq 10^7$ and $d\mid b$ ($b$ is defined as below). The first $8$ values below are well-known (see Western Number Theory Problems, 17 to 19 Dec 2019, 019:03).

\begin{remark}\label{Remark 3.1} Let $d\geq 2$ be a squarefree integer, let $L=\mathbb{Q}(\sqrt[3]{d})$, let $\mathcal{O}_L$ be the ring of algebraic integers of $L$, let $\eta>1$ be the fundamental unit of $\mathcal{O}_L$ and let $a,b,c\in\mathbb{N}$ be the unique positive integers with $\eta=\frac{1}{3}(a+b\sqrt[3]{d}+c\sqrt[3]{d^2})$. Then $\{d:2\leq d\leq 10^7$ is squarefree and $d\mid b\}=\{d:2\leq d\leq 10^7$ is squarefree and $3d\mid b\}=\{3,6,15,39,42,57,330,1185,28131,47019,89411,125265,144147,435498,1688610,4580214,5123415\}$.
\end{remark}

The values in Remark~\ref{Remark 3.1} were determined with Pari/GP. Despite the fact that the AAC-Conjecture and Mordell's Pellian Equation Conjecture have been refuted, there are still interesting open problems involving squarefree $d\geq 2$ with $d\mid y$. We say that $f\in\mathbb{N}$ is {\it powerful} if $f=a^2b^3$ for some $a,b\in\mathbb{N}$.

\smallskip
{\bf The Conjecture of Erd\"os, Mollin and Walsh or EMW-Conjecture.} For each $a\in\mathbb{N}$, there is some $b\in\{a,a+1,a+2\}$ such that $b$ is not powerful.

The EMW-Conjecture was mentioned in \cite{Er75} and has subsequently been rediscovered in \cite{MoWa86}. It was shown in \cite{MoWa86} that there is some $a\in\mathbb{N}$ such that $a$, $a+1$ and $a+2$ are powerful if and only if there are some $d,k,u,v\in\mathbb{N}$ such that $d$ is squarefree, $d\equiv 7\mod 8$, $k$ and $v$ are odd, $u$ is powerful, $d\mid v$ and $\varepsilon^k=u+v\sqrt{d}$. Now we discuss the problem that motivated us to search for squarefree $d\geq 2$ with $d\mid y$.

\smallskip
{\bf Sets of distances.} Let $L$ be an algebraic number field, let $\mathcal{O}$ be an order in $L$ and let $\mathcal{A}$ be the set of nonzero nonunits of $\mathcal{O}$. A $u\in\mathcal{A}$ is called an {\it atom} of $\mathcal{O}$ if $u$ is not a product of two nonunits of $\mathcal{O}$. Each $a\in\mathcal{A}$ is a finite product of atoms of $\mathcal{O}$. For each $a\in\mathcal{A}$, let $\mathsf{L}(a)=\{k\in\mathbb{N}:a$ is a product of $k$ atoms$\}$, called the {\it set of lengths of $a$} and let $\Delta(a)=\{s-r:r,s\in\mathsf{L}(a),r<s,\{n\in\mathsf{L}(a):r\leq n\leq s\}=\{r,s\}\}$, called the {\it set of distances of $a$}. We set $\Delta(\mathcal{O})=\bigcup_{b\in\mathcal{A}}\Delta(b)$, called the {\it set of distances of $\mathcal{O}$}. We say that $\mathcal{O}$ is {\it half-factorial} if $\Delta(\mathcal{O})=\emptyset$ and in this case we set $\min\Delta(\mathcal{O})=0$. The half-factorial orders have been characterized recently in \cite{Ra23}. If $\mathcal{O}$ is seminormal (i.e., for all $x\in L$ with $x^2,x^3\in\mathcal{O}$, $x\in\mathcal{O}$) or $L$ is an imaginary quadratic number field, then $\min\Delta(\mathcal{O})\leq 1$ (see \cite{Re23}).

Now let $L=K$ be a real quadratic number field. Then $\min\Delta(\mathcal{O})\leq 2$ (see \cite{Re23}) and $\mathcal{O}$ is called {\it unusual} if $\min\Delta(\mathcal{O})=2$. Unusual orders have been completely characterized in \cite[Theorems 2.9 and 4.4]{Re23} and it was shown in \cite[Example 3.2]{Re23} that unusual orders do exist. It is possible to connect unusual orders to the condition $d\mid y$ (see \cite[Proposition 2.4 and Theorem 2.5]{Re24}). But apart from $d=5374184665$, we do not know if any of these situations can actually occur. This leads us to the following open problem.

\smallskip
{\bf Open problem.} Find squarefree integers $d\geq 2$ that satisfy one of the following conditions.
\begin{enumerate}
\item[(a)] There are $p,q\in\mathbb{P}$ such that $p\equiv 5\mod 8$, $q\equiv 3\mod 4$, $d=pq$, ${\rm h}(d)=2$, $y$ is odd and $d\mid y$.
\item[(b)] There are $p,q\in\mathbb{P}$ with $p\not=q$, $p\equiv q\equiv 1\mod 4$, $d=pq\equiv 5\mod 8$, ${\rm h}(d)=2$, ${\rm N}(\varepsilon)=-1$, $d\mid y$.
\item[(c)] There are $p,q\in\mathbb{P}$ such that $p\equiv 1\mod 8$, $q\equiv 3\mod 4$, $d=pq$, ${\rm h}(d)=2$, $y$ is odd and $d\mid y$.
\item[(d)] There are distinct $p,q\in\mathbb{P}$ such that $p\equiv q\equiv 3\mod 8$, $d=2pq$, ${\rm h}(d)=2$ and $d\mid y$.
\end{enumerate}

\noindent {\bf ACKNOWLEDGEMENTS.} We want to thank A. Geroldinger and the referees for many useful suggestions, remarks and comments that tremendously improved the quality of this note.


\begin{thebibliography}{10}
\bibitem{Ag16}
T.~Agoh, \emph{Congruences related to the Ankeny-Artin-Chowla conjecture}, Integers \textbf{16} (2016), Paper No. A12, 30 pp.

\bibitem{AnArCh52}
N. C.~Ankeny, E.~Artin, and S.~Chowla, \emph{The class-number of real quadratic number fields}, Ann. of Math. \textbf{56} (1952), 479--493.

\bibitem{BeWiZa71}
B. D.~Beach, H. C.~Williams, and C. R.~Zarnke, \emph{Some computer results on units in quadratic and cubic fields}, Proc. 25th Summer Meeting Can. Math. Congress, Lakehead University, 1971, 609--648.

\bibitem{BeMo24}
Y.~Benmerieme and A.~Movahhedi, \emph{Ankeny-Artin-Chowla and Mordell conjectures in terms of $p$-rationality}, J. Number Theory \textbf{257} (2024), 202--214.

\bibitem{BrLeSe75}
J.~Brillhart, D. H.~Lehmer, and J. L.~Selfridge, \emph{New primality criteria and factorizations of $2^m\pm 1$}, Math. Comp. \textbf{29} (1975), 620--647.

\bibitem{ChSa14}
D.~Chakraborty and A.~Saikia, \emph{Another look at real quadratic number fields of relative class number 1}, Acta Arith. \textbf{163} (2014), 371--377.

\bibitem{CoLe84}
H.~Cohen and H. W.~Lenstra, \emph{Heuristics on class groups}, in: Number theory, Lecture Notes in Math. 1052, Springer, Berlin, 1984, 26--36.

\bibitem{Er75}
P.~Erd\"os, \emph{Problems and results on number theoretic properties of consecutive integers and related questions}, in Proc. Fifth Manitoba Conf. Numerical Mathematics,University of Manitoba, Winnipeg, MB, 1975, Congressus Numerantium, 25--44.

\bibitem{FeMu23}
N.~Fellini and M. R.~Murty, \emph{Fermat quotients and the Ankeny-Artin-Chowla Conjecture}, in: Class Groups of Number Fields and Related Topics, ICCGNERT ICCGNERT 2021 2022, Springer Proceedings in Mathematics \& Statistics 470, Springer, Singapore, 2024, 1--23.

\bibitem{Fe25}
N.~Fellini, \emph{Congruence relations of Ankeny–Artin–Chowla type for real quadratic fields}, Int. J. Number Theory \textbf{21} (2025), 1317--1335.

\bibitem{Ha13}
F.~Halter-Koch, \emph{Quadratic Irrationals}, Monogr. Textbooks Pure Appl. Math. 306, Chapman \& Hall/CRC, Boca Raton, FL, 2013.

\bibitem{HaJo18}
J.~Harrington and L.~Jones, \emph{A new condition equivalent to the Ankeny-Artin-Chowla conjecture}, J. Number Theory \textbf{192} (2018), 240--250.

\bibitem{Ha01}
R.~Hashimoto, \emph{Ankeny-Artin-Chowla conjecture and continued fraction expansion}, J. Number Theory \textbf{90} (2001), 143--153.

\bibitem{Ha07}
W.~Hassler, \emph{Direct-sum cancellation for modules over real quadratic orders}, J. Pure Appl. Algebra \textbf{208} (2007), 575--589.

\bibitem{HiSh22}
Y.~Hirakawa and Y.~Shimizu, \emph{Counterexamples to the local-global principle for non-singular plane curves and a cubic analogue of Ankeny-Artin-Chowla-Mordell conjecture}, Proc. Amer. Math. Soc. \textbf{150} (2022), 1821--1835.

\bibitem{KiSl59}
A. A.~Kiselev and I. Sh.~Slavutski\u{\i}, \emph{On the number of classes of ideals of a quadratic field and its rings}, Dokl. Akad. Nauk SSSR \textbf{126} (1959), 1191--1194 (in Russian).

\bibitem{Lu92}
H.~Lu, \emph{Hecke operators and Pellian equation conjecture II}, Chinese Ann. Math. Ser. B \textbf{13} (1992), 46--53.

\bibitem{MoWa86}
R. A.~Mollin and P. G.~Walsh, \emph{A note on powerful numbers, quadratic fields and the Pellian}, C. R. Math. Rep. Acad. Sci. Canada \textbf{8} (1986), 109--114.

\bibitem{Mo60}
L. J.~Mordell, \emph{On a Pellian equation conjecture}, Acta Arith. \textbf{6} (1960), 137--144.

\bibitem{Mo61}
L. J.~Mordell, \emph{On a Pellian equation conjecture (II)}, J. London Math. Soc. \textbf{36} (1961), 282--288.

\bibitem{Po14}
H. C.~Pocklington, \emph{The determination of the prime or composite nature of large numbers by Fermat's theorem}, Proc. Cambridge Philos. Soc. \textbf{18} (1914), 29--30.

\bibitem{VaTeWi01}
A. J.~van der Poorten, H. J. J.~te Riele, and H. C.~Williams, \emph{Computer verification of the Ankeny-Artin-Chowla conjecture for all primes less than $100000000000$}, Math. Comp. \textbf{70} (2001), 1311--1328.

\bibitem{VaTeWi03}
A. J.~van der Poorten, H. J. J.~te Riele, and H. C.~Williams, \emph{Corrigenda and addition to \cite{VaTeWi01}}, Math. Comp. \textbf{72} (2003), 521--523.

\bibitem{Ra23}
B.~Rago, \emph{A characterization of half-factorial orders in algebraic number fields}, Acta Arith. \textbf{217} (2025), 1--17.

\bibitem{Re23}
A.~Reinhart, \emph{On orders in quadratic number fields with unusual sets of distances}, Acta Arith. \textbf{211} (2023), 61--92.

\bibitem{Re24}
A.~Reinhart, \emph{A counterexample to the Pellian equation conjecture of Mordell}, Acta Arith. \textbf{215} (2024), 85--95.

\bibitem{Rei24}
A.~Reinhart, \emph{On counterexamples to Mordell's Pellian Equation Conjecture and the AAC-Conjecture: a non-computer based approach}, arXiv: 2404.03038 (2024).

\bibitem{Sh72}
D. Shanks, \emph{The infrastructure of a real quadratic field and its applications}, in: Proceedings of the 1972 Number Theory Conference, University of Colorado, Boulder, 1972, 217--224.

\bibitem{SiSh24}
S. V.~Sidorov and P. A.~Shcherbakov, \emph{On the period length modulo $D$ of sequences of numerators and denominators of convergents for the square root of a non-square $D$}, in: Mathematical Modeling and Supercomputer Technologies, MMST 2023, Comm. Computer Information Sci. 1914, Springer, Cham, 2024, 28--43.

\bibitem{Sl04}
I. Sh.~Slavutski\u{\i}, \emph{A real quadratic field and the Ankeny-Artin-Chowla conjecture}, J. Math. Sci. (N.Y.) \textbf{122} (2004), 3673--3678.

\bibitem{StWi88}
A. J.~Stephens and H. C.~Williams, \emph{Some computational results on a problem concerning powerful numbers}, Math. Comp. \textbf{50} (1988), 619--632.

\bibitem{Wa24}
P. G.~Walsh, \emph{A question of Erd\"os on 3-powerful numbers and an elliptic curve analogue of the Ankeny-Artin-Chowla conjecture}, Rad Hrvat. Akad. Znan. Umjet. Mat. Znan. \textbf{29} (2025), 83--87.

\bibitem{Wa17}
H.~Wang, \emph{Numerical Tests of Two Conjectures in Fake Real Quadratic Orders}, Master's thesis, University of Calgary, Calgary, Canada, 2017.

\bibitem{Wa82}
L. C.~Washington, \emph{Introduction to cyclotomic fields}, Grad. Texts in Math. 83, Springer-Verlag, New York, 1982, xi+389 pp.

\bibitem{YuYu98}
J.~Yu and J.-K.~Yu, \emph{A note on a geometric analogue of Ankeny-Artin-Chowla's conjecture}, Contemp. Math. \textbf{210} (1998), 101--105.
\end{thebibliography}
\end{document}